\newtheorem{theorem}{Theorem}
\theoremstyle{definition}
\theoremstyle{proposition}
\newtheorem{proposition}{Proposition}
\theoremstyle{remark}
\theoremstyle{corollary}
\begin{document}

\title[Fixed points of circle actions]{Fixed points of circle actions on spaces with rational cohomology of $S^n \vee S^{2n} \vee S^{3n}$ or $ P^2(n) \vee S^{3n}$}

\author{Mahender Singh}

\address{School of Mathematics, Harish-Chandra Research Institute, Chhatnag Road, Jhunsi, Allahabad 211019, INDIA}

\email{mahen51@gmail.com}

\subjclass{Primary 57S10; Secondary 58E40, 55R91}

\keywords{Cohomology ring, fibration, finitistic space, Hopf invariant, totally non-homologous to zero}

\begin{abstract}
Let $X$ be a finitistic space with its rational cohomology isomorphic to that of the wedge sum $P^2(n)\vee S^{3n} $ or $S^{n} \vee S^{2n}\vee S^{3n}$. We study continuous $\mathbb{S}^1$ actions on $X$ and determine the possible fixed point sets up to rational cohomology depending on whether or not $X$ is totally non-homologous to zero in $X_{\mathbb{S}^1}$ in the Borel fibration $X\hookrightarrow X_{\mathbb{S}^1} \longrightarrow B_{\mathbb{S}^1}$. We also give examples realizing the possible cases.
\end{abstract}

\maketitle

\section{Introduction}
This paper is concerned with the fixed point sets of continuous $\mathbb{S}^1$ actions on certain types of spaces introduced by Toda \cite{T}. Toda studied the cohomology ring of a space $X$ having only non trivial cohomology groups $H^{in}(X; \mathbb{Z})=\mathbb{Z}$ for $i=$ 0, 1, 2 and 3, where $n$ is a fixed positive integer. If $u_i \in  H^{in}(X; \mathbb{Z})$ is a generator for $i=$ 1, 2 and 3, then the ring structure of $H^*(X; \mathbb{Z})$ is completely determined by the integers $a$ and $b$ such that
 $$u^2_1=au_2 ~\textrm{and}~ u_1u_2=bu_3.$$
Such a space is said to be of type $(a,b)$. Note that if $n$ is odd, then $u_1^2=0$ and hence $a = 0$. We shall write $X\simeq_{\mathbb{Q}} Y$ if there is an abstract isomorphism of graded rings $H^*(X; \mathbb{Q}) \stackrel{\cong}{\rightarrow} H^*(Y; \mathbb{Q})$. Similarly, by $X\simeq_{\mathbb{Q}} P^h(n)$ we mean that $H^*(X; \mathbb{Q}) \cong \mathbb{Q}[z]/z^{h+1}$, where $z$ is a homogeneous element of degree $n$.

Recall that, given spaces $X_i$ with chosen base points $x_i \in X_i$ for $i=$ 1, 2, ..., $n$, their wedge sum $\vee_{i=1}^n X_i$ is the quotient of the disjoint union $\sqcup_{i=1}^n X_i$ obtained by identifying the points $x_1, x_2, ..., x_n$ to a single point.\\
It is clear that if $b \neq 0$, then
$$X\simeq_{\mathbb{Q}} S^n \times S^{2n}~\textrm{for} ~ a = 0$$
or
$$X\simeq_{\mathbb{Q}} P^3(n)~ \textrm{for}~ a \neq 0.$$
And, if $b = 0$, then
$$X\simeq_{\mathbb{Q}} S^n \vee S^{2n} \vee S^{3n} ~ \textrm{for}~a = 0$$
or
$$X\simeq_{\mathbb{Q}} P^2(n) \vee S^{3n}~ \textrm{for} ~ a \neq 0.$$

\indent Let the group $G= \mathbb{S}^1$ act continuously on a space $X$ of type $(a,b)$. This gives a fibration $X\hookrightarrow X_G \longrightarrow B_G$ called Borel fibration (see \cite{Bo}, Chapter IV). Here $X_G=(X \times E_G) /G$ is the orbit space of the diagonal action on $X \times E_G$ and $B_G$ is the base space of the universal principal $G$-bundle $G \hookrightarrow E_G \longrightarrow B_G$. We say that $X$ is totally non-homologous to zero in $X_G$ if the inclusion of a typical fiber $X\hookrightarrow X_G$ induces a surjection in the cohomology $H^*(X_G; \mathbb{Q}) \longrightarrow H^*(X; \mathbb{Q})$. This condition is equivalent to a nice relation between the ranks of the cohomology of the space and the fixed point set (Proposition 3).

The cohomological nature of the fixed point sets of actions of cyclic groups $\mathbb{Z}_p$ on spaces of type $(a,b)$ has been studied in detail. For any prime $p$, when $b \not\equiv 0$ mod $p$, Bredon (\cite{B1}, \cite{B2}) and Su (\cite{S1}, \cite{S2}) have investigated the fixed point sets of $\mathbb{Z}_p$ actions. For odd primes $p$, when $b \equiv 0$ mod $p$, Dotzel and Singh (\cite{D1}, \cite{D2}) have determined the cohomological nature of the fixed point sets of $\mathbb{Z}_p$ actions. Singh \cite{Si} has investigated the fixed point sets for $p = 2$ and $b \equiv 0$ mod 2.\\
For $b \neq 0$, the cohomological nature of the fixed point sets of $\mathbb{S}^1$ actions has been studied in detail by Bredon (\cite{B1}, \cite{B2}). The aim of this paper is to study $\mathbb{S}^1$ actions for the case $b= 0$ and determine the possible fixed point sets up to rational cohomology.

Before we state our results, recall that, a paracompact Hausdorff space is said to be finitistic if every open covering has a finite dimensional open refinement, where the dimension of a covering is one less than the maximum number of members of the covering which intersect non-trivially. The concept of finitistic spaces was introduced by Swan \cite {Swan} for his work in fixed point theory. It is a large class of spaces including all compact Hausdorff spaces and all paracompact spaces of finite covering dimension. We prove the following results:\\

\begin{theorem} Let $G = \mathbb{S}^1$ act on a finitistic space $X$ of type $(a,0)$ with fixed point set $F$. Suppose $X$ is totally non-homologous to zero in $X_G$, then $F$ has at most four components satisfying the following:
\begin{enumerate}
 \item If $F$ has four components, then each is acyclic and $n$ is even.
\item If $F$ has three components, then $n$ is even and
\begin{center}
 $F \simeq_{\mathbb{Q}} S^r \sqcup \{point_1 \} \sqcup \{ point_2 \}$ for some even integer $2\leq r\leq 3n$.
\end{center}
\item If $F$ has two components, then either
\begin{center}
 $F \simeq_{\mathbb{Q}} S^r \sqcup S^s $ or $(S^r \vee S^s) \sqcup \{point \}$ for some integers $1 \leq r, s \leq 3n$
\end{center}
or
\begin{center}
 $F \simeq_{\mathbb{Q}} P^2(r) \sqcup \{ point \}$ for some even integer $2 \leq r \leq n$. 
\end{center}
\item If $F$ has one component, then either
\begin{center}
 $F\simeq_{\mathbb{Q}} S^r \vee S^s \vee S^t$ for some interers $1 \leq r, s, t \leq 3n$
\end{center}
or
\begin{center}
 $F\simeq_{\mathbb{Q}} S^s \vee P^2(r)$ for some integers $1 \leq r \leq n$ and $1 \leq s \leq 3n$.
\end{center}
\end{enumerate}
Moreover, if $n$ is even, then $X$ is always totally non-homologous to zero in $X_G$. Further, all the cases are realizable.\\
\end{theorem}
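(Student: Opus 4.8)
The plan is to exploit the Borel fibration $X\hookrightarrow X_G\longrightarrow B_G$ with $B_G=\mathbb{CP}^\infty$, so that $H^*(B_G;\mathbb{Q})=\mathbb{Q}[t]$ with $\deg t=2$. Since $X$ is totally non-homologous to zero, the Leray--Hirsch theorem makes $H^*_G(X;\mathbb{Q})$ a free $\mathbb{Q}[t]$-module of rank $4$ on a homogeneous basis $1,\tilde u_1,\tilde u_2,\tilde u_3$ lifting $1,u_1,u_2,u_3$ (degrees $0,n,2n,3n$). Writing $F=\bigsqcup_i F_i$ and $S=\{1,t,t^2,\dots\}$, the localization theorem identifies $S^{-1}H^*_G(X;\mathbb{Q})$ with $\bigoplus_i S^{-1}\bigl(H^*(F_i;\mathbb{Q})\otimes\mathbb{Q}[t]\bigr)$, so the total restriction $\rho=\bigoplus_i\rho_i$ is injective with finite-dimensional ($t$-torsion) cokernel, and hence so is each $\rho_i$. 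Comparing $\mathbb{Q}[t]$-ranks recovers $\sum_i\dim_{\mathbb{Q}}H^*(F_i;\mathbb{Q})=4$ (Proposition 3), so $F$ has at most four components; comparing Hilbert series gives $(1-x^2)\mid\bigl(P_F(x)-(1+x^n+x^{2n}+x^{3n})\bigr)$, and evaluation at $x=-1$ yields $\chi(F)=\chi(X)=2+2(-1)^n$, which is $4$ for $n$ even and $0$ for $n$ odd; a refinement comparing the even and odd parts after inverting $t$ shows that for $n$ even the ring $H^*(F;\mathbb{Q})$ is concentrated in even degrees.

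Next I would bound the cohomological dimension of each component. Since $\mathrm{im}(\rho_i)$ is generated over $\mathbb{Q}[t]$ in degrees $\le 3n$ and has finite colength in the free module $H^*(F_i;\mathbb{Q})\otimes\mathbb{Q}[t]$, a nonzero class in some $H^\delta(F_i;\mathbb{Q})$ with $\delta>3n$, multiplied by a large power of $t$, would lie in $\mathrm{im}(\rho_i)$; expressing it as a $\mathbb{Q}[t]$-combination of $1,\rho_i(\tilde u_1),\rho_i(\tilde u_2),\rho_i(\tilde u_3)$ and counting degrees forces every coefficient to be divisible by $t$, so the class itself would be divisible by $t$, which is impossible since it reduces to something nonzero in $H^*_G(F_i;\mathbb{Q})/(t)=H^*(F_i;\mathbb{Q})$. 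Hence $H^{j}(F_i;\mathbb{Q})=0$ for $j>3n$, giving the ``$\le 3n$'' bounds in the statement. As in the chosen bases $\rho_i$ is represented by a matrix of monomials in $t$ whose determinant must be a nonzero monomial, the same bookkeeping sharpens the bound to $r\le n$ in the $P^2(r)$-cases.

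Then comes the classification by the number of components. Each $F_i$ is connected, so $H^0(F_i;\mathbb{Q})=\mathbb{Q}$, and distributing the total Betti number $4$ and listing the finite-dimensional graded-commutative $\mathbb{Q}$-algebras $\mathbb{Q}\oplus\mathfrak{m}$ with $\dim_{\mathbb{Q}}\mathfrak{m}\le 3$ shows that the only shapes available for a component are: a rational point ($\dim 1$); $S^r$ ($\dim 2$); $S^r\vee S^s$ or $P^2(r)$ ($\dim 3$); and $S^r\vee S^s\vee S^t$, $P^2(r)\vee S^s$, $S^r\times S^s$, or $P^3(r)$ ($\dim 4$). Graded commutativity forces $r$ even in the $P^2(r)$- and $P^3(r)$-cases. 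Combining this list with $\chi(F)=2+2(-1)^n$ pins down the parity of $n$ and of the relevant sphere dimensions exactly as stated; for instance the four-acyclic-components case and the $S^r\sqcup\{pt\}\sqcup\{pt\}$ case each force $n$ (and $r$) to be even.

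The main obstacle is to exclude the two extra shapes $S^r\times S^s$ and $P^3(r)$ that the classification a priori allows. For this I would pull the relations $u_1^2=au_2$ and $u_1u_2=u_1u_3=u_2^2=u_2u_3=u_3^2=0$ back along $(F_i\hookrightarrow X)^*$ (the reduction of $\rho_i$ modulo $t$), obtaining classes $w_k=(F_i\hookrightarrow X)^*(u_k)$ with $w_1^2=aw_2$ and all other products of the positive $w_k$ vanishing; in particular the subring of $H^*(F_i;\mathbb{Q})$ they generate has cup length $\le 2$. If $\mathrm{coker}(\rho_i)=0$ then $(F_i\hookrightarrow X)^*$ is a ring isomorphism, so $H^*(F_i;\mathbb{Q})\cong H^*(X;\mathbb{Q})$ is of type $(a,0)$, which excludes $P^3(r)$ (cup length $3$) and $S^r\times S^s$ (whose cohomology ring, when it sits in degrees $0,n,2n,3n$, is that of $S^n\times S^{2n}$, a space of type $(0,1)$). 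If $\mathrm{coker}(\rho_i)\ne 0$ then $1,\rho_i(\tilde u_1),\rho_i(\tilde u_2),\rho_i(\tilde u_3)$ are forced to be $\mathbb{Q}[t]$-linearly independent (otherwise the cokernel would have positive rank), so $\rho_i$ is given by a square matrix of homogeneous polynomials with nonzero determinant; substituting the relations $\rho_i(\tilde u_1)^3\in t\,H^*_G(F_i;\mathbb{Q})$, $\rho_i(\tilde u_1)\rho_i(\tilde u_2)\in t\,H^*_G(F_i;\mathbb{Q})$, and so on, and running through the finitely many admissible configurations of degrees, forces this determinant to vanish, the desired contradiction. Finally I would record that when $n$ is even $H^*(X;\mathbb{Q})$ lives in even degrees, so every differential in the spectral sequence of the Borel fibration vanishes for parity reasons and $X$ is automatically totally non-homologous to zero; and I would settle realizability by assembling explicit linear $\mathbb{S}^1$-actions on spheres and on the standard projective planes $P^2(r)$ along fixed points via wedges (and joins where needed), checking in each case that the resulting finitistic space has the prescribed rational cohomology ring, is totally non-homologous to zero, and has the prescribed fixed set.
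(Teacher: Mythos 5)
Your first half runs on the same rails as the paper's proof: total non-homology to zero gives $\sum_i \dim_{\mathbb{Q}}H^*(F_i;\mathbb{Q})=4$ (Proposition 3), $\chi(F)=\chi(X)$ (Proposition 4), and the parity bookkeeping; your spectral-sequence parity argument for ``$n$ even $\Rightarrow$ totally non-homologous to zero'' is a perfectly acceptable substitute for the paper's Euler-characteristic contradiction, and your degree-counting argument that $H^j(F_i;\mathbb{Q})=0$ for $j>3n$ is essentially correct. The genuine gaps are exactly at the two points where you go beyond this bookkeeping. You have rightly isolated the delicate step (excluding the ring types $S^r\times S^s$ and $P^3(r)$ for connected $F$, and getting $r\le n$ in the $P^2(r)$ cases --- the paper's text simply lists the admissible forms), but your proposed way of closing it does not work as stated: the claim that ``running through the finitely many admissible configurations of degrees forces this determinant to vanish'' is never exhibited, and the monomial-matrix bookkeeping by itself cannot yield $r\le n$, since for $F\simeq_{\mathbb{Q}}P^2(r)\sqcup\{point\}$ it only requires the entries to exist, i.e. $2r\le 3n$, and the determinant is a nonzero monomial for generic coefficients whenever $n<r\le 3n/2$. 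A multiplicative argument is unavoidable: when $a\ne 0$ the relation $\tilde u_1^2=a\tilde u_2+t(\cdots)$ restricted to the $P^2(r)$ component kills the needed matrix entry in one line, but when $a=0$ the mod-$t$ relations $\tilde u_1^2\equiv\tilde u_1\tilde u_2\equiv\cdots\equiv 0$ can be matched without any visible contradiction, so your cokernel-nonzero subcase (unlike your correct cokernel-zero observation) is an unproved assertion, not a computation.

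The second gap is realizability, which is part of the statement (``all the cases are realizable'') and occupies most of the paper's proof. ``Assembling linear actions on spheres and standard projective planes'' is not enough: genuine projective planes exist only for $n=2,4,8$, so for arbitrary even $n$ the paper manufactures a rational $P^2(n)$ as the mapping cone $X_n$ of the $O(n)$-equivariantly suspended bidegree $(2,-1)$ map of Proposition 1 (with Hopf invariant $-2$ by Proposition 2) and uses the induced circle actions with fixed sets $X_0$ (three points) or $X_r\simeq_{\mathbb{Q}}P^2(r)$; fixed sets such as $(S^r\vee S^s)\sqcup\{point\}$, $S^r\sqcup S^s$, or $P^2(r)\sqcup\{point\}$ inside a rational $S^n\vee S^{2n}\vee S^{3n}$ are produced by joining a freely acted $S^{n-1}$ to the mapping cone of an equivariant Hopf construction built from complex, quaternionic or octonionic multiplication (the octonionic case needing a special action with fixed set $S^5$), or by Tomter's action on the quaternionic projective plane fixing $\mathbb{C}P^2$; and Theorem 1's case of three components likewise rests on the $X_n$ construction. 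None of these constructions, nor the verification that the resulting spaces have the right rational cohomology and are totally non-homologous to zero, is supplied by your sketch, so the realizability half of the theorem is missing rather than merely compressed.
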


\begin{theorem} Let $G= \mathbb{S}^1$ act on a finitistic space $X$ of type $(a,0)$ with fixed point set $F$. Suppose $X$ is not totally non-homologous to zero in $X_G$, then either $F= \phi$ or $F\simeq_{\mathbb{Q}} S^r$, where $1 \leq r\leq 3n $ is an odd integer. Moreover, the second possibility is realizable.
\end{theorem}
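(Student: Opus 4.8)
The plan is to read the fixed point data off the Borel fibration $X\hookrightarrow X_G\to B_G$ by combining the rank estimate of Proposition~3 with the Euler characteristic identity for circle actions, and then to write down one explicit $\mathbb{S}^1$-space. Since $X$ is \emph{not} totally non-homologous to zero in $X_G$, the last sentence of Theorem~1 shows that $n$ cannot be even; hence $n$ is odd, and then, as observed in the introduction, $a=0$, so that $X\simeq_{\mathbb{Q}}S^{n}\vee S^{2n}\vee S^{3n}$. In particular $\sum_i\dim_{\mathbb{Q}}H^i(X;\mathbb{Q})=4$, with nonzero rational cohomology in degrees $0,n,2n,3n$, and since $n$ is odd, $\chi(X)=1+(-1)^n+(-1)^{2n}+(-1)^{3n}=0$. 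As $\chi(F)=\chi(X)$ for a circle action on a finitistic space, we obtain $\chi(F)=0$; in particular $F=\phi$ is allowed.

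For the dichotomy, Proposition~3 gives $\sum_i\dim_{\mathbb{Q}}H^i(F;\mathbb{Q})\le\sum_i\dim_{\mathbb{Q}}H^i(X;\mathbb{Q})=4$, with equality precisely when $X$ is totally non-homologous to zero in $X_G$. In the non-TNHZ case the total rational Betti number of $F$ is thus at most $3$; it cannot equal $1$ or $3$, since a space with total Betti number $1$ or $3$ has odd Euler characteristic, whereas $\chi(F)=0$. Hence $\sum_i\dim_{\mathbb{Q}}H^i(F;\mathbb{Q})\in\{0,2\}$. If this number is $0$, then $F=\phi$. If it is $2$, then $F$ is either a disjoint union of two $\mathbb{Q}$-acyclic pieces, which is impossible since then $\chi(F)=2\ne0$, or $F$ is connected with $H^0(F;\mathbb{Q})\cong H^r(F;\mathbb{Q})\cong\mathbb{Q}$ for a single integer $r\ge1$ and all other rational cohomology zero, that is, $F\simeq_{\mathbb{Q}}S^r$. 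In the latter case $\chi(F)=1+(-1)^r=0$ forces $r$ odd, and since the rational cohomology of $F$ vanishes above degree $\operatorname{cd}_{\mathbb{Q}}(X)=3n$ (standard from Smith theory, or from the Borel fibration spectral sequence whose fibre cohomology is concentrated in degrees $\le 3n$), we get $r\le 3n$. This proves the stated dichotomy.

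To realize the second possibility I would build a finite $\mathbb{S}^1$-CW complex as follows. As $n$ is odd, $3n$ is odd; let $A=S^{3n}=S(\mathbb{C}^{(3n+1)/2})$ carry the free Hopf action of $\mathbb{S}^1$ by scalar multiplication, and let $D^{2n}$ be the unit disk of $\mathbb{C}^n$ with $\mathbb{S}^1$ acting by scalar multiplication, so that $\mathbb{S}^1$ fixes exactly the origin and acts freely on $\partial D^{2n}=S^{2n-1}$. Normalizing a $\mathbb{C}$-linear inclusion $\mathbb{C}^n\hookrightarrow\mathbb{C}^{(3n+1)/2}$ yields an equivariant map $f\colon S^{2n-1}\to S^{3n}$; set $G=S^{3n}\cup_f D^{2n}$. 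Then $\mathbb{S}^1$ acts on $G$ with $F_G=\{0\}$, and the long exact sequence of the pair $(G,S^{3n})$ together with $\widetilde H^*(G/S^{3n};\mathbb{Q})\cong\widetilde H^*(S^{2n};\mathbb{Q})$ shows $\widetilde H^*(G;\mathbb{Q})=\mathbb{Q}$ in degrees $2n$ and $3n$; as $\dim G=3n$, all cup products of positive-degree classes vanish, so $G\simeq_{\mathbb{Q}}S^{2n}\vee S^{3n}$. Finally let $X=S^n\vee G$, wedged at the fixed point $0\in G$, with the trivial $\mathbb{S}^1$-action on the $S^n$-summand. Then $X$ is a (compact, hence finitistic) $\mathbb{S}^1$-space with $X\simeq_{\mathbb{Q}}S^n\vee S^{2n}\vee S^{3n}$, with fixed set $F=S^n\simeq_{\mathbb{Q}}S^n$ where $n$ is odd and $1\le n\le 3n$, and with $\dim_{\mathbb{Q}}H^*(F;\mathbb{Q})=2<4$, so that by Proposition~3 the action is not totally non-homologous to zero; varying $A$ and the rotation disk gives analogous examples for other admissible values of $r$.

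The step I expect to require the most care is the rank/parity bookkeeping in the second paragraph: one needs the sharp form of Proposition~3, the identity $\chi(F)=\chi(X)$, and the bound $\operatorname{cd}_{\mathbb{Q}}(F)\le\operatorname{cd}_{\mathbb{Q}}(X)$ to be cleanly available so that the Betti number $3$ case and the two-point case are genuinely excluded and $r$ is pinned down to be odd and at most $3n$. The other point needing verification is the cohomological computation for the example --- that the mapping cone $G$ has exactly the claimed rational cohomology ring and fixed set, and that the wedge behaves as expected --- but this is routine. Once $n$ is forced to be odd, the dichotomy itself is short.
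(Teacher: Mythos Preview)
Your dichotomy argument is correct and follows the paper's line exactly: force $n$ odd via the last clause of Theorem~1, get $\chi(F)=\chi(X)=0$ from Proposition~4, bound the total rational Betti number of $F$ by $3$ via the strict inequality in Proposition~3, and then rule out totals $1$ and $3$ by parity of $\chi$. Your case analysis is in fact more explicit than the paper's, which simply asserts the conclusion after listing the possible values of $\chi(F)$. The bound $r\le 3n$ via $\mathrm{cd}_{\mathbb{Q}}(F)\le\mathrm{cd}_{\mathbb{Q}}(X)$ is the standard localization fact and is used (without comment) in the paper as well.

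Your realization is genuinely different from the paper's. The paper builds an $\mathbb{S}^1$-action on $S^2\times S^{n+2}$ with fixed set $S^3$ out of the Hopf $2$-plane bundle and its inverse, deletes a fixed point to obtain $Z\simeq_{\mathbb{Q}}S^2\vee S^{n+2}$ with contractible fixed set, suspends by joining with a trivially acted $S^{n-3}$ to get $W\simeq_{\mathbb{Q}}S^n\vee S^{2n}$ still with contractible fixed set, and finally wedges with $S^{3n}$ carrying a linear action fixing $S^r$. Your construction is more elementary---an equivariant $2n$-cell attached to a freely acted $S^{3n}$, then a trivially acted $S^n$ wedged on---and it also covers $n=1$, which the paper's join with $S^{n-3}$ does not. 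On the other hand, the paper's construction, by placing the nontrivial fixed sphere inside the $S^{3n}$ factor, realizes \emph{every} odd $r$ with $1\le r\le 3n$, whereas your example as written yields only $r=n$; your closing remark about ``varying $A$ and the rotation disk'' does not obviously produce $r>n$. Since the theorem only asserts that the second possibility is realizable, your single example suffices for the statement, but if you want the full range of $r$ you should either adapt the paper's idea (arrange the $S^n\vee S^{2n}$ piece to have a contractible fixed set and put $S^r$ inside $S^{3n}$) or say explicitly how your construction is to be varied.
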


In studying group actions it is necessary to count the number of conjugacy classes of isotropy subgroups of the group. If $G$ is a compact Lie group and $H$ is a closed subgroup of $G$, then let $[H]$ denote the conjugacy class of $H$ in $G$. The group $G$ is said to act on a space $X$ with finitely many orbit types (FMOT) if the set $\{[G_x]\;|\; x \in X \}$ is finite, where $G_x = \{g \in G\;|\; g.x=x\}$ is the isotropy subgroup at $x$. For a subgroup $H$ of $G$, let $H^0$ denote the component of identity in $H$. Then $G$ is said to act on $X$ with finitely many connective orbit types (FMCOT) if the set $\{[G_x^0]\;|\; x \in X \}$ is finite. The group actions of such type are the correct ones to consider in generalising results about the actions of finite groups to those of compact groups. See for example \cite{A} and \cite{B1} for a detailed account of results concerning such actions. Clearly FMOT implies FMCOT. When working with a field of characteristic zero only FMCOT is needed (\cite{A}, p.131). It is clear that the group $G= \mathbb{S}^1$ acts on any space with FMCOT.

While studying circle actions it is natural to ask what happens for torus actions. We note that:
\begin{itemize}
\item If the $r$-torus $G= {(\mathbb{S}^1)}^r$ acts on a space $X$ of type $(a,0)$ with FMCOT, then by Lemma 4.2.1(1) of Allday and Puppe \cite{A}, there is a subcircle $\mathbb{S}^1 \subset G$ such that their fixed point sets are same, that is $X^{\mathbb{S}^1}=X^G$.
\item If $\mathbb{S}^1$ acts on a space $X$, then we can define $G= {(\mathbb{S}^1)}^r= \mathbb{S}^1 \times {(\mathbb{S}^1)}^{r-1}$ action on $X$ by
$$\big( (g,g_1,...,g_{r-1}), x \big) \mapsto g.x$$
such that $X^{\mathbb{S}^1}=X^G$.
\end{itemize}
Thus, with an additional assumption of FMCOT, our results and examples hold for torus actions also.

\section{Preliminaries}
For details of most of the content in this section we refer to  \cite{A} and \cite{B1}. The cohomology used will be the \v{C}ech cohomology with rational coefficients. We shall use the join $X\star Y$ of two spaces $X$ and $Y$, which is defined as the quotient of $X \times Y \times I$ under the identifications $(x,y_1,0) \sim (x,y_2,0)$ and $(x_1,y,1) \sim (x_2,y,1)$, where $I$ is the unit interval. Thus we are collapsing the subspace $X \times Y \times \{ 0\}$ to $X$ and  $X \times Y \times \{ 1\}$ to $Y$. If $Y$ is a two point space, then $X\star Y$ is called the suspension of $X$ and is denoted by $S(X)$.\\
Observe that,
\begin{itemize}
\item if a group $G$ acts on both $X$ and $Y$ with fixed point sets $F_1$ and $F_2$ respectively, then the induced action on the join $X\star Y$ has the fixed point set $F_1 \star F_2$.
\item a given map $f:X \times Y \to Z$ induces a map $\tilde{f}:X\star Y \to S(Z)$ given by
$$ \tilde{f}\big(\;\overline{(x,y,t)}\; \big)=\overline{(f(x,y),t)}.$$
\end{itemize}

\noindent We say that a map $f:S^{n-1}\times S^{n-1} \to S^{n-1}$ has bidegree $(\alpha,\beta)$, if the map $f|_{S^{n-1} \times \{ p_2\}}$ has degree $\alpha$ and the map $f|_{\{ p_1 \} \times S^{n-1}} $ has degree $\beta$, where $(p_1, p_2) \in S^{n-1}\times S^{n-1}$. The bidegree of $f$ is independent of the choice of $(p_1, p_2)$. The following result is well known.

\begin{proposition}
For every even integer $n\geq 2$ there is a map $\varphi:S^{n-1}\times S^{n-1} \to S^{n-1}$ of bidegree $(2,-1)$.
\end{proposition}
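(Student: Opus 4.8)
The plan is to construct $\varphi$ directly, using the fact that for $n$ even the identity map on $S^{n-1}$ has degree $-1$ under the antipodal-type flip and, more importantly, that one can add maps into a sphere along a coordinate. First I would recall the standard "clutching"/pinch construction: the quotient map $S^{n-1}\times S^{n-1}\to S^{n-1}\times S^{n-1}/(S^{n-1}\vee S^{n-1})\cong S^{2n-2}$ is not directly useful here; instead the right tool is the co-H-structure is wrong dimension, so I would instead build $\varphi$ as a composite $S^{n-1}\times S^{n-1}\xrightarrow{q} S^{n-1}\vee S^{n-1}\xrightarrow{(\psi_1,\psi_2)} S^{n-1}$, where $q$ collapses the "top cell". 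Concretely, choosing basepoints so that $S^{n-1}\vee S^{n-1}\subset S^{n-1}\times S^{n-1}$ is the $1$-skeleton (in the CW sense with cells in dimensions $0,n-1,n-1,2n-2$), one extends a map on the wedge over the whole product provided the obstruction in $\pi_{2n-2}(S^{n-1})$ vanishes; this obstruction is exactly the Whitehead product $[\psi_1,\psi_2]$.

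The key steps, in order, are: (1) Take $\psi_1\colon S^{n-1}\to S^{n-1}$ of degree $2$ and $\psi_2\colon S^{n-1}\to S^{n-1}$ of degree $-1$ (the latter realized by a reflection, which has degree $-1$ regardless of parity, or by $-\mathrm{id}$ when $n-1$ is odd, i.e. $n$ even). (2) Form the map on the wedge $S^{n-1}\vee S^{n-1}\to S^{n-1}$ that is $\psi_1$ on the first summand and $\psi_2$ on the second; by construction its restriction to the two axis circles $S^{n-1}\times\{p_2\}$ and $\{p_1\}\times S^{n-1}$ has degrees $2$ and $-1$. (3) Show this extends over $S^{n-1}\times S^{n-1}$: the only obstruction lies in $\pi_{2n-2}(S^{n-1})$ and equals the Whitehead product $[\psi_1,\psi_2]=(\deg\psi_1)(\deg\psi_2)[\iota,\iota]=-2[\iota,\iota]$, where $[\iota,\iota]\in\pi_{2n-2}(S^{n-1})$ is the Whitehead square of the identity. (4) Invoke the classical fact that for $n-1$ odd (equivalently $n$ even) the Whitehead square $[\iota,\iota]$ has order dividing $2$ — indeed $2[\iota,\iota]=0$ because $[\iota,\iota]$ is the image of the Hopf construction on the multiplication $S^{n-1}\times S^{n-1}\to S^{n-1}$... more precisely, $[\iota_m,\iota_m]=0$ in $\pi_{2m-1}(S^m)$ for $m=1,3,7$ and in general $[\iota_m,\iota_m]$ has order $2$ when $m$ is odd (a theorem going back to work on the Hopf invariant); since here $m=n-1$ is odd we get $2[\iota,\iota]=0$, hence $-2[\iota,\iota]=0$, so the extension exists. (5) The resulting extension $\varphi\colon S^{n-1}\times S^{n-1}\to S^{n-1}$ has, by (2), bidegree $(2,-1)$.

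The main obstacle I expect is step (4): pinning down exactly why $2[\iota_{n-1},\iota_{n-1}]=0$ for $n-1$ odd, since the order of the Whitehead square depends delicately on the dimension (it is related to the Hopf invariant one problem and to the vanishing of $[\iota_m,\iota_m]$ only for $m\in\{1,3,7\}$, while for other odd $m$ the class is nonzero but $2$-torsion). The clean statement to cite is: for $m$ odd, $E[\iota_m,\iota_m]=0$ in $\pi_{2m}(S^{m+1})$ and $2[\iota_m,\iota_m]=0$ in $\pi_{2m-1}(S^m)$, the latter following from the EHP sequence together with the fact that $[\iota_m,\iota_m]$ suspends trivially and $H[\iota_m,\iota_m]=\pm2\iota_{2m-1}$. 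An alternative, more self-contained route that sidesteps the obstruction-theory bookkeeping is to write down $\varphi$ by an explicit formula: identify $S^{n-1}$ with the one-point compactification of $\mathbb{R}^{n-1}$ or use a degree-$2$ self-map $d_2$ of $S^{n-1}$ supported away from a basepoint and set $\varphi(x,y)$ to be a "concatenation" $d_2(x)\ast \rho(y)$ along a chosen great-circle coordinate, where $\rho$ is a reflection; checking that this is well defined and computing the two restriction degrees is then a direct verification. I would present the obstruction-theoretic argument as the conceptual proof and mention the explicit construction as the alternative, and I would not grind through the coordinate formulas in the writeup since the statement is flagged as well known.
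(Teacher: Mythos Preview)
Your obstruction-theoretic argument is correct in outline (modulo a small indexing slip: the obstruction lives in $\pi_{2n-3}(S^{n-1})$, not $\pi_{2n-2}$), and step~(4) is easier than you make it: for $m=n-1$ odd, graded commutativity of the Whitehead product gives $[\iota_m,\iota_m]=(-1)^{m^2}[\iota_m,\iota_m]=-[\iota_m,\iota_m]$, hence $2[\iota_m,\iota_m]=0$ directly, with no EHP machinery needed. This is, however, a genuinely different route from the paper's. The paper simply writes down the explicit formula $\varphi(x,y)=y-2\langle x,y\rangle x$, the reflection of $y$ through the hyperplane $x^\perp$, and checks the bidegree by hand: fixing $x$ gives a reflection (degree $-1$), while fixing $y$ gives a map that collapses the equator and has degree $2$. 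The payoff of the explicit formula is twofold: it is entirely elementary (no homotopy-group input), and---crucially for the constructions later in the paper---it is visibly $O(n)$-equivariant for the diagonal action on the source and the standard action on the target. Your abstract extension produces \emph{some} $\varphi$ of bidegree $(2,-1)$ but gives no control over equivariance, so it would not plug into the paper's subsequent example-building without additional work.
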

\begin{proof} Define $ \varphi:S^{n-1} \times S^{n-1} \to S^{n-1}$ by $$\varphi (x,y)= y-2(\sum_{i=1}^n x_iy_i)x.$$
If we fix $x=(1,0,...,0)$, then $\varphi(x,y)= (-y_1,y_2,...,y_n)$ which has degree -1. If we fix $y=(1,0,...,0)$, then $ \varphi(x,y)= (1-2{x_1}^2,-2x_1x_2,...,-2x_1x_n)=g(x)$ say. Note that $g$ maps the points $(0,x_2,...,x_n)$ to $(1,0,...,0)$ and is one-to-one for points $(x_1,x_2,...,x_n)$ with $x_1 \neq 0$. Therefore it can be factored into $S^{n-1} \to S^{n-1} \vee S^{n-1} \to S^{n-1}$, where the first map has degree 2 and the second map has degree 1 (see \cite{H} Chapter VII Theorem 10.1). Hence $g$ has degree 2 and $\varphi$ has bidegree $(2,-1)$.\\
\end{proof}

Let $f:S^{n-1}\times S^{n-1} \to S^{n-1}$ be a map of bidegree $(\alpha,\beta)$ and
$$\tilde{f}:S^{2n-1}=S^{n-1}\star S^{n-1} \to S(S^{n-1})=S^n$$
be the induced map. If $Y$ is the complex obtained by attaching a $2n$-cell $e^{2n}$ to $S^n$ via $\tilde{f}$, then for generators $x \in H^n(Y; \mathbb{Z})$ and $y \in H^{2n}(Y; \mathbb{Z})$, we have $x^2=h(\tilde{f})y$ for some integer $h(\tilde{f})$ called the Hopf invariant of $\tilde{f}$. A homotopy of $\tilde{f}$ leaves the homotopy type of $Y$ unchanged and so the Hopf invariant is an invariant of the homotopy class of $\tilde{f}$. The following result relates the Hopf invariant with the bidegree.

\begin{proposition} 
$h(\tilde{f})= \pm \alpha \beta$
\end{proposition}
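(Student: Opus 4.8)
The plan is to reduce the computation of the Hopf invariant $h(\tilde f)$ to the special bidegree $(2,-1)$ case, where it can be computed directly, and then to use bilinearity of the Hopf invariant in the bidegree to get the general formula. First I would recall the standard fact that the homotopy class of $\tilde f$ (hence $h(\tilde f)$) depends only on the homotopy class of $f$, and that the map $f \mapsto \tilde f$ is compatible with addition: if $f_1, f_2 \colon S^{n-1} \times S^{n-1} \to S^{n-1}$ have bidegrees $(\alpha_1,\beta_1)$ and $(\alpha_2,\beta_2)$, then one can produce a map of bidegree $(\alpha_1+\alpha_2, \beta_1+\beta_2)$ whose induced suspension map has Hopf invariant $h(\tilde f_1) + h(\tilde f_2) + (\text{cross terms})$. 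Here the cross terms are exactly what makes $h$ quadratic rather than linear in $(\alpha,\beta)$, so I would be careful to set up the bookkeeping so that $h(\tilde f)$ as a function of $(\alpha,\beta)$ is a quadratic form; combined with $h = 0$ when $\alpha = 0$ or $\beta = 0$ (since then $\tilde f$ factors through a wedge and $x^2 = 0$ in $Y$), this forces $h(\tilde f) = c\,\alpha\beta$ for a universal constant $c$.

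Next I would pin down the constant $c$. The cleanest way is to use a model where the answer is known: the Hopf maps. For $n = 2$, the complex multiplication map $f \colon S^1 \times S^1 \to S^1$ has bidegree $(1,1)$, its suspension $\tilde f$ is (homotopic to) the Hopf map $\eta \colon S^3 \to S^2$, and $Y = \mathbb{C}P^2$ has $x^2 = \pm y$, so $h(\tilde f) = \pm 1 = c \cdot 1 \cdot 1$, giving $c = \pm 1$. The same argument with quaternionic and octonionic multiplication handles the sign consistently; alternatively, for general even $n$ one can use the map $\varphi$ of Proposition 1 of bidegree $(2,-1)$ and directly check that the complex $Y_\varphi$ obtained from it has $x^2 = \mp 2 y$, matching $c\,\alpha\beta = c\cdot(-2)$. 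Either route yields $c = \pm 1$, and hence $h(\tilde f) = \pm \alpha\beta$.

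An alternative, more self-contained approach avoids the additivity argument entirely and computes $x^2$ directly from the cell structure of $Y$. Attaching $e^{2n}$ to $S^n$ via $\tilde f$, the cup square $x^2 \in H^{2n}(Y)$ is detected by the composite $S^{2n-1} \xrightarrow{\tilde f} S^n \to S^n$ paired against the diagonal; concretely, $h(\tilde f)$ equals the linking/Hopf pairing which, under the join construction $\tilde f = S(f)$ applied to a bidegree-$(\alpha,\beta)$ map, unwinds to the intersection number of the two "coordinate sphere" contributions, namely $\alpha \cdot \beta$ up to sign. The sign ambiguity $\pm$ is genuinely unavoidable because it depends on orientation conventions for the cells and on the identification $S(S^{n-1}) = S^n$, which is why the statement is phrased with $\pm$.

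The main obstacle I anticipate is the additivity/bilinearity step: one must show that $\tilde f$ for a bidegree-$(\alpha,\beta)$ map can be built by "adding" copies of a standard generator, and that the Hopf invariant behaves as a quadratic form under this addition — this requires either an explicit geometric construction of maps realizing prescribed bidegrees (pinching $S^{n-1}$ or $S^{2n-1}$ to a wedge, composing with fold maps) or an appeal to the James/Hilton description of $\pi_{2n-1}(S^n)$ and the Hopf–James homomorphism. Once that structural fact is in hand, determining the constant $c$ is routine via any one worked example. I would therefore organize the write-up as: (i) $h$ depends only on homotopy class and vanishes when $\alpha\beta = 0$; (ii) $h$ is a quadratic form in $(\alpha,\beta)$, hence a multiple of $\alpha\beta$; (iii) evaluate the multiple on $\varphi$ from Proposition 1 (or on a Hopf map) to get $\pm 1$.
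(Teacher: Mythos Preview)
The paper does not actually prove this proposition: it simply cites Husemoller, \emph{Fibre Bundles}, Chapter~XIV,~3.5, where the result is established by a direct cup-product computation exploiting the decomposition $S^{2n-1}=\partial(D^n\times D^n)=(S^{n-1}\times D^n)\cup(D^n\times S^{n-1})$ and the relative cup product in the mapping cone. Your \emph{alternative} approach---computing $x^2$ directly from the cell structure---is exactly this standard argument and is the one to write out; the two degrees $\alpha$ and $\beta$ appear as the relative degrees of $\tilde f$ on the two halves $S^{n-1}\times D^n$ and $D^n\times S^{n-1}$, and their product gives $h(\tilde f)$ up to sign with no induction or bilinearity needed.

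Your primary (bilinearity) approach, by contrast, has real gaps. First, you implicitly assume that $h(\tilde f)$ depends only on the bidegree, but two maps $S^{n-1}\times S^{n-1}\to S^{n-1}$ with the same bidegree can differ by a map factoring through the top cell $S^{2n-2}$, and you have not shown this leaves $h(\tilde f)$ unchanged; that is essentially the content of the proposition. Second, your justification for $h=0$ when $\alpha\beta=0$ (``$\tilde f$ factors through a wedge'') is incorrect: bidegree $(0,\beta)$ does not force any such factorisation of $f$ or of $\tilde f$. Third, for general even $n$ not every bidegree is realised---by Adams' Hopf-invariant-one theorem, bidegree $(1,1)$ exists only for $n\in\{2,4,8\}$, and more generally $\alpha\beta$ must be an achievable Hopf invariant---so your putative quadratic form is only defined on a proper subset of $\mathbb{Z}^2$, and the ``determine the constant $c$ from one value'' step does not go through as stated. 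Finally, evaluating $c$ via the map $\varphi$ of Proposition~1 would require computing $x^2$ in $Y_\varphi$ directly, which is the same computation as the general case and yields no economy.
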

\noindent See \cite{H}, Chapter XIV, 3.5.\\

\noindent The following facts regarding $\mathbb{S}^1$ actions can be easily deduced.

\begin{proposition}
Let $G= \mathbb{S}^1$ act on a finitistic space $X$ with fixed point set $F$. Suppose that $\sum_{i\geq0} rk H^i(X) < \infty$, then $$ \sum_{i\geq0} rk H^{i}(F)\leq \sum_{i\geq0} rk H^{i}(X).$$
Furthermore, $ \sum_{i\geq0} rk H^{i}(F) = \sum_{i\geq0} rk H^{i}(X)$ if and only if $X$ is totally non-homologous to zero in $X_G$.
\end{proposition}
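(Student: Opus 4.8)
The plan is to combine the Serre spectral sequence of the Borel fibration $X\hookrightarrow X_G\longrightarrow B_G$ with the Borel localization theorem, both of which are available over $\mathbb{Q}$ for $G=\mathbb{S}^1$ acting on a finitistic space (see \cite{A}, \cite{B1}). Write $\mathbb{Q}[t]=H^*(B_G)$ with $\deg t=2$. As $B_G$ is simply connected, this spectral sequence has $E_2\cong \mathbb{Q}[t]\otimes H^*(X)$, which by the hypothesis $\sum_i rk\,H^i(X)<\infty$ is a finitely generated \emph{free} graded $\mathbb{Q}[t]$-module, and it converges to $H^*(X_G)$, of which $E_\infty$ is the associated graded for a filtration that is finite in each degree. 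Throughout I write $\mathrm{rank}\,M=\dim_{\mathbb{Q}(t)}(M\otimes_{\mathbb{Q}[t]}\mathbb{Q}(t))$ for the rank of a finitely generated graded $\mathbb{Q}[t]$-module $M$, and recall that this rank is additive on short exact sequences, is unchanged on passing to the associated graded of a filtration that is finite in each degree, and does not increase on passing to the homology of a $\mathbb{Q}[t]$-linear complex of such modules (all the differentials $d_r$ being $\mathbb{Q}[t]$-linear).

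First I would establish that $\mathrm{rank}\,H^*(X_G)=\sum_i rk\,H^i(F)$. Since $G$ acts trivially on $F$ we have $H^*(F_G)\cong \mathbb{Q}[t]\otimes H^*(F)$, a free module of rank $\sum_i rk\,H^i(F)$. The localization theorem---applicable because the coefficient field has characteristic zero, so that $\mathbb{S}^1$ automatically acts with finitely many connective orbit types---asserts that the restriction map $H^*(X_G)\to H^*(F_G)$ becomes an isomorphism after inverting $t$; taking ranks yields the claimed equality, and in particular $\sum_i rk\,H^i(F)<\infty$. The inequality of the proposition is then immediate:
$$\sum_i rk\,H^i(F)=\mathrm{rank}\,H^*(X_G)=\mathrm{rank}\,E_\infty\leq \mathrm{rank}\,E_2=\sum_i rk\,H^i(X).$$

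For the equivalence I would prove that the following four conditions are equivalent: (i) $\sum_i rk\,H^i(F)=\sum_i rk\,H^i(X)$; (ii) $\mathrm{rank}\,E_\infty=\mathrm{rank}\,E_2$; (iii) every differential $d_r$ vanishes, i.e. $E_2=E_\infty$; (iv) $X$ is totally non-homologous to zero in $X_G$. That (i) and (ii) are equivalent is the displayed formula above. For the equivalence of (ii) and (iii)---the one place where freeness of $E_2$ is essential---note that $\mathrm{rank}\,E_{r+1}=\mathrm{rank}\,E_r-2\,\mathrm{rank}(\mathrm{im}\,d_r)$, so preservation of rank forces each $\mathrm{im}\,d_r$ to be a torsion $\mathbb{Q}[t]$-module; but, inductively, as long as $d_2,\dots,d_{r-1}$ vanish one has $E_r=E_2$, a free module, whose torsion submodule $\mathrm{im}\,d_r$ must then be zero, so $d_r=0$ (the reverse implication is trivial). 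Finally, for the equivalence of (iii) and (iv): $E_2$ is generated over $\mathbb{Q}[t]$ by the fibre classes $E_2^{0,*}=1\otimes H^*(X)$ and the $d_r$ are $\mathbb{Q}[t]$-linear, so all $d_r$ vanish precisely when every fibre class survives to $E_\infty$, that is, when $E_\infty^{0,*}=E_2^{0,*}$; and this equality says exactly that the edge homomorphism $H^*(X_G)\to H^*(X)$---the map induced by the fibre inclusion $X\hookrightarrow X_G$---is surjective, which is the definition of $X$ being totally non-homologous to zero in $X_G$.

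The genuine obstacle is the localization theorem, which is the only ingredient relating $H^*(X_G)$ to the fixed point set $F$; the rest is bookkeeping with graded $\mathbb{Q}[t]$-modules, where the one delicate point is that one must use the freeness of $E_2$ to upgrade ``the rank is preserved'' (equivalently, that the spectral sequence collapses after inverting $t$) to genuine collapse at the $E_2$-page.
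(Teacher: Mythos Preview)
Your argument is correct. The paper itself does not prove this proposition but simply cites Theorem~3.10.4 of Allday--Puppe \cite{A}; your proof---combining the Serre spectral sequence of the Borel fibration with the Borel localization theorem, and exploiting the freeness of $E_2$ over $\mathbb{Q}[t]$ to upgrade rank-preservation to genuine collapse---is precisely the standard argument behind that reference, so there is nothing to compare.
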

\noindent See \cite{A}, Theorem 3.10.4.\\

\begin{proposition}
Let $G=\mathbb{S}^1$ act on a finitistic space $X$ with fixed point set $F$. Suppose that $\sum_{i\geq0} rk H^i(X) < \infty$, then $$\chi(X)=\chi(F).$$
\end{proposition}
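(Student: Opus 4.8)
The plan is to deduce this from the localization theorem for $\mathbb{S}^1$ actions together with the fact that the rational cohomology of $X_G$ is finitely generated as a module over $H^*(B_G;\mathbb{Q})=\mathbb{Q}[t]$, $\deg t = 2$. First I would recall that, because $X$ is finitistic with $\sum_i \mathrm{rk}\,H^i(X)<\infty$, the Borel construction $X_G$ has rational cohomology that is a finitely generated $\mathbb{Q}[t]$-module (this is part of the standard machinery in \cite{A}, used already for Proposition 3). Writing $\bar R = \mathbb{Q}[t,t^{-1}]$ for the localization, the Localization Theorem gives an isomorphism of $\bar R$-modules $t^{-1}H^*(X_G;\mathbb{Q}) \cong t^{-1}H^*(F_G;\mathbb{Q})$, induced by the inclusion $F\hookrightarrow X$; and since $G$ acts trivially on $F$, one has $H^*(F_G;\mathbb{Q})\cong H^*(F;\mathbb{Q})\otimes\mathbb{Q}[t]$.

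The key step is to compare ranks of free parts and Euler characteristics through the spectral sequence of the Borel fibration $X\hookrightarrow X_G\to B_G$. I would argue that the rank of $H^*(X_G;\mathbb{Q})$ as a $\mathbb{Q}[t]$-module — equivalently the rank of $t^{-1}H^*(X_G;\mathbb{Q})$ as a free $\bar R$-module, graded into even and odd parts — can be computed on the $E_2 = E_\infty$-level only up to the differentials, but the relevant bookkeeping is the \emph{Euler characteristic}: since each differential $d_r$ in the Serre spectral sequence lowers total cohomological parity by one, the alternating sum $\sum_i (-1)^i \mathrm{rk}\, E_2^{*,i}$-type quantity is preserved. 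Concretely, localizing at $t$ kills all the torsion $\mathbb{Q}[t]$-submodules, and the free rank of $t^{-1}H^{\mathrm{even}}(X_G)$ minus that of $t^{-1}H^{\mathrm{odd}}(X_G)$ equals $\chi(X)$ (because over the generic point the fibration ``looks like'' $X\times$ a point in a way that computes $\chi(X)$ — more carefully, one uses that $E_2^{p,q}=H^p(B_G)\otimes H^q(X)$ contributes $\sum_q(-1)^q\mathrm{rk}\,H^q(X)=\chi(X)$ to the free $\bar R$-rank difference, and differentials preserve this signed count). Applying the same computation to $F_G$ with its trivial action gives free rank difference equal to $\chi(F)$. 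By the Localization Theorem isomorphism these two numbers agree, so $\chi(X)=\chi(F)$.

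An alternative, slightly more hands-on route that I would actually prefer to write: use Proposition 3 together with an evenness/parity refinement. Proposition 3 already gives $\sum_i \mathrm{rk}\,H^i(F)\le\sum_i\mathrm{rk}\,H^i(X)$ with equality iff $X$ is TNHZ; the same localization argument, applied degreewise, shows in addition that the ``defect'' $\sum_i\mathrm{rk}\,H^i(X)-\sum_i\mathrm{rk}\,H^i(F)$ is always even and, when one tracks which degrees the cancelling classes occupy, it splits as an equal number of even-degree and odd-degree classes being lost — precisely because a transgression-type differential $d_r$ pairs a class in degree $q$ with one in degree $q+r-1$ where $r$ is even (as $H^{\mathrm{odd}}(B_G)=0$ forces $r$ even), hence the two paired classes have opposite parity. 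Therefore the alternating sums on the two sides are equal, which is exactly $\chi(X)=\chi(F)$.

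The main obstacle, and the step deserving the most care, is justifying that the differentials in the Leray--Serre spectral sequence of the Borel fibration preserve the signed (Euler-characteristic) count of $\mathbb{Q}[t]$-free rank, i.e.\ that passing from $E_2$ to $E_\infty$ and then localizing does not disturb $\chi$. This is where the hypothesis $\sum_i\mathrm{rk}\,H^i(X)<\infty$ and finitisticness are essential (they guarantee the spectral sequence is well-behaved and the module is finitely generated so that localization is exact and rank is additive), and where one must be careful that $\chi(X)$ is even well-defined — which it is, again by the finite total rank hypothesis. Everything else is routine once the Localization Theorem and the structure of $H^*(F_G;\mathbb{Q})$ as $H^*(F;\mathbb{Q})\otimes\mathbb{Q}[t]$ are in hand.
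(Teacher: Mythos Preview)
The paper does not actually prove this proposition; it simply cites Allday--Puppe, Corollary~3.1.13 and Remark~3.10.5(2), and treats the result as known. Your proposal is a correct sketch of the standard localization argument behind that reference, so in substance you are supplying what the paper takes for granted.

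Your first route is sound: finite generation of $H^*_G(X;\mathbb{Q})$ over $\mathbb{Q}[t]$, the localization isomorphism $t^{-1}H^*_G(X)\cong t^{-1}H^*_G(F)\cong H^*(F)\otimes\mathbb{Q}[t,t^{-1}]$, and a spectral-sequence count showing that the signed free rank of $H^*_G(X)$ equals $\chi(X)$. The tidiest way to make that last step precise is via Poincar\'e series: writing $P_{X_G}(s)=R(s)/(1-s^2)$ with $R$ a polynomial (from finite generation), one checks that $R(-1)$ equals the signed free rank (each torsion summand $\mathbb{Q}[t]/(t^k)$ contributes $(1-s^{2k})/(1-s^2)$, hence $0$ at $s=-1$), while the Serre spectral sequence gives $P_{X_G}(s)=P_X(s)/(1-s^2)-(1+s)Q(s)$ for some $Q$ with nonnegative coefficients, so $R(-1)=P_X(-1)=\chi(X)$. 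Your second route---only even differentials survive because $E_r^{p,q}=0$ for $p$ odd, and an even $d_r$ changes $q$-parity---is equivalent, since $(-1)^{p+q}=(-1)^q$ on the nonzero terms.

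One small point worth making explicit: the localization isomorphism matches the free part of $H^*_G(X)$ with $H^*(F)\otimes\mathbb{Q}[t]$ only up to even degree shifts, so the generator degrees agree with those of $H^*(F)$ only modulo~$2$. That is precisely what is needed to conclude $R(-1)=\chi(F)$, but your phrase ``applied degreewise'' in the second approach hints at a stronger degree-by-degree matching than actually holds; it would be cleaner to state the conclusion at the level of parities from the outset.
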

\noindent See \cite{A}, Corollary 3.1.13 and Remark 3.10.5 (2).\\

We now recall a $\mathbb{S}^1$ action on the unit sphere $S^n$ that we shall use in the following sections. For an odd integer $n$ and $r=(n+1)/2$, the unit sphere $S^n= \{(z_1,z_2,..., z_r) \in \mathbb{C}^r|\sum_{i=1}^{r} |z_i|^2=1 \}$ has a free $\mathbb{S}^1$ action given by
$$\big( z,(z_1,z_2,...,z_r) \big) \mapsto (zz_1,zz_2,...,zz_r).$$
Taking suspension gives a $\mathbb{S}^1$ action on the even dimensional sphere $S^{n+1}$ with exactly two fixed points.\\
\indent We shall also use some elementary notions about vector bundles for which we refer to Husemoller \cite{H}. An action of a group on a vector bundle will be by bundle maps. Theorem 1 is proved in Section 3 and Theorem 2 is proved in Section 4.

\section{Proof of Theorem 1 and examples}

Let $X$ be totally non-homologous to zero in $X_G$. Then by Proposition 3,
$$ \sum_{i\geq 0} rkH^i(F)=\sum_{i\geq 0} rkH^i(X)=4.$$
It is clear that $F$ has at most four components.\vspace{1.5 mm } \\
\indent \textbf{Case (1)} Suppose $F$ has four components, then it is clear that each is acyclic. \\
By Proposition 4, $\chi(X)=\chi(F)=4$ and hence $n$ is even.\vspace{1.5 mm } \\
\indent For $a=0$ and even integer $n$, we can take $X= S^n\vee S^{2n}\vee S^{3n}$. As discussed in the previous section, an even dimensional sphere has a $\mathbb{S}^1$ action with exactly two fixed points. Taking the wedge sum at some fixed points of $S^n, S^{2n}$ and $S^{3n}$ gives a four fixed point $\mathbb{S}^1$ action on $X$.\vspace{1.5 mm } \\
\indent For $a\neq 0$, we know that $X\simeq_{\mathbb{Q}}P^2(n) \vee S^{3n}$. Let $n$ be an even integer, then by  Proposition 1, there is a map $$\varphi:S^{n-1} \times S^{n-1} \to S^{n-1}$$ of bidegree $(2,-1)$. It is clear that $\varphi$ is equivariant with respect to the usual $O(n)$ action on $S^{n-1}$ and the diagonal action on $S^{n-1} \times S^{n-1}$ and hence the induced map $$\tilde{\varphi}:S^{2n-1} \to S^{n}$$ is also equivariant with respect to the induced action. Let $X_n$ denote the mapping cone of $\tilde{\varphi}$ which inherits an $O(n)$ action. By Proposition 2, the Hopf invariant of $\tilde{\varphi}$  is  -2.  Then $X_n \simeq_{\mathbb{Q}} P^2(n)$ and $X_0$ consists of three points. It is clear that if $G \subset O(n)$ acts on $\mathbb{R}^n$ with fixed point set $\mathbb{R}^k$, then the induced action on $X_n$ has fixed point set $X_k$. Let $\mathbb{S}^1 \subset O(n)$ acts on $\mathbb{R}^n$ with exactly one fixed point $\mathbb{R}^0$, then it act on $X_n$ with the fixed point set $X_0$. Let $\mathbb{S}^1$ act on $S^{3n}$ with exactly two fixed points, then the wedge sum $X=X_n \vee S^{3n}$ at fixed points has a $\mathbb{S}^1$ action with exactly four fixed points.\\

\textbf{Case (2)} Suppose that $F$ has three components, then
$$F \simeq_{\mathbb{Q}} S^r \sqcup \{point_1 \} \sqcup \{ point_2 \} ~ \textrm{for some} ~ 1 \leq r \leq 3n .$$
Note that $\chi(F)= 2$ or 4 according as $r$ is odd or even. But $\chi(X)=\chi(F)$ implies that both $n$ and $r$ are even. \vspace{1.5 mm}\\
\indent For $a=0$ and even integer $2\leq r\leq 3n-2$, take $X= S^n\vee S^{2n}\vee S^{3n}$. As $1\leq (3n-r-1)$ is an odd integer, $\mathbb{S}^1$ has a free action on $S^{3n-r-1}$ and the $(r+1)$-fold suspension gives a $\mathbb{S}^1$ action on $S^{3n}$ with $S^r$ as its fixed point set. Taking a two fixed point action on both $S^n$ and $S^{2n}$, the wedge sum at fixed points gives a $\mathbb{S}^1$ action on $X$ with the fixed point set $F \simeq_{\mathbb{Q}} S^r \sqcup \{point_1 \} \sqcup \{ point_2 \}$. \vspace{1.5 mm}\\
\indent For $a\neq 0$, take $X = X_n \vee S^{3n}$. Taking the above three fixed point action of $\mathbb{S}^1$ on $X_n \simeq_{\mathbb{Q}}P^2(n)$ and the action on $S^{3n}$ with $S^r$ as its fixed point set, the wedge sum gives a $\mathbb{S}^1$ action on $X$ with the desired fixed point set.\\

\textbf{Case(3)} Suppose $F$ has two components. Then
$$F\simeq_{\mathbb{Q}} S^r \sqcup S^s, ~ (S^r \vee S^s) \sqcup \{ point \} ~ \textrm{or} ~ P^2(r) \sqcup \{ point \}.$$
If $n$ is odd, $\chi(F)=\chi(X)=0$ and hence
$$F\simeq_{\mathbb{Q}} S^r \sqcup S^s ~ \textrm{or} ~ (S^r \vee S^s) \sqcup \{ point \} ~ \textrm{for odd integers} ~ 1\leq r,s\leq 3n.$$
And if $n$ is even, $\chi(F)= \chi(X)=4$ and hence
$$F\simeq_{\mathbb{Q}} S^r \sqcup S^s ~ \textrm{or} ~ (S^r \vee S^s) \sqcup \{ point \} ~ \textrm{for even integers} ~    2\leq r,s\leq 3n$$
or
$$F\simeq_{\mathbb{Q}} P^2(r)\sqcup \{ point \} ~ \textrm{for some even integer} ~ 2\leq r\leq n .$$
\indent Let $a=0$ and $n$ = 2, 4 or 8. Let $S^{n-1}$ denote the set of complex numbers, quaternions or octanions of norm 1 and $$f:S^{n-1} \times S^{n-1} \to S^{n-1}$$ be the multiplication of complex numbers, quaternions or octanions for $n$ = 2, 4 or 8 respectively. For $n$ = 2 and 4, let $\mathbb{S}^1$ act on $S^{n-1}$ by $$(z,w)\mapsto zw$$
and act on $S^{n-1} \times S^{n-1}$ by $$\big( z,(w_1,w_2) \big) \mapsto (zw_1, w_2).$$
Then $\mathbb{S}^1$ acts freely on both $S^{n-1}$ and $S^{n-1} \times S^{n-1}$ and $f$ is a $\mathbb{S}^1$-equivariant map. Thus the induced map $$\tilde{f}: S^{2n-1} \to S^n$$ is also $\mathbb{S}^1$-equivariant and $\mathbb{S}^1$ act on the mapping cone $M$ of $\tilde{f}$ with $S^n \sqcup \{point \}$ as its fixed point set.\\
\indent For $n =8$ denote an element of $S^{n-1}$ by $(w_1,w_2)$ where $w_1$ and $w_2$ are quaternions. Let $\mathbb{S}^1$ act on $S^{n-1}$ by $$\big( z,(w_1,w_2) \big) \mapsto (zw_1z,w_2)$$
and act on $S^{n-1} \times S^{n-1}$ by $$\Big( z, \big( (w_1,w_2),(w_3,w_4) \big) \Big) \mapsto \big( (zw_1,w_2z),(w_3z,w_4\overline{z}) \big).$$
Then $\mathbb{S}^1$ acts freely on each factor of $S^{n-1} \times S^{n-1}$ and acts on $S^{n-1}$ with fixed point set $S^5$. Note that, $f$ is $\mathbb{S}^1$-equivariant and hence the induced map $\tilde{f}$ is also $\mathbb{S}^1$-equivariant. Thus $\mathbb{S}^1$ acts on the mapping cone $M$ of $\tilde{f}$ and has the fixed point set $S^6 \sqcup \{ point\}$.\\
\indent Note that $f$ has bidegree $(1,1)$ and hence $\tilde{f}$ has Hopf invariant 1. This shows $M\simeq_{\mathbb{Q}} P^2(n)$. Let $\mathbb{S}^1$ act freely on $S^{n-1}$ and act on $S^n$ with $S^r$ as the fixed set for some even integer $r$. Then $\mathbb{S}^1$ acts on $Y=S^{n-1}\star M$ with the fixed point set $S^s \sqcup\{ point\}$, where $s$ is an even integer and hence act on $X=S^n \vee Y\simeq_{\mathbb{Q}}S^n \vee S^{2n} \vee S^{3n}$ with the fixed point set $S^r \sqcup S^s$ or $(S^r \vee S^s) \sqcup \{ point \}$ depending on the wedge sum.\\
For the remaining case, we consider the $\mathbb{S}^1$ action on $X_n \simeq_{\mathbb{Q}}P^2(n)$ with $X_r \simeq_{\mathbb{Q}}P^2(r)$ as its fixed point set, where both $n$ and $r$ are even. Let $\mathbb{S}^1$ act freely on $S^{n-1}$ and act on $S^n$ with exactly two fixed points. Then it acts on $Y=S^{n-1}\star X_n$ with $X_r$ as the fixed point set and hence acts on $X=S^n \vee Y\simeq_{\mathbb{Q}}S^n \vee S^{2n} \vee S^{3n}$ with the fixed point set $F\simeq_{\mathbb{Q}}P^2(r) \sqcup \{ point \}$. Alternatively, one can use Tomter \cite{Tom}, where a $\mathbb{S}^1$ action has been constructed on the actual quaternionic projective space $QP^2$ with the complex projective space $\mathbb{C}P^2$ as its fixed point set and do the above construction.
\vspace{1.5 mm}\\
\indent For $a\neq 0$, take $X=P^2(n)\vee S^{3n}$. As above, we can construct $\mathbb{S}^1$ actions on $X$ with the desired fixed point sets.\\

\textbf{Case(4)} Suppose $F$ has one component. Then either
 $$F\simeq_{\mathbb{Q}} S^r \vee S^s \vee S^t ~ \textrm{for some interers}~ 1 \leq r, s, t \leq 3n$$
or
$$F\simeq_{\mathbb{Q}} S^s \vee P^2(r) ~ \textrm{for some integers} ~ 1 \leq r \leq n ~ \textrm{and} ~ 1 \leq s \leq 3n.$$
\indent As $\chi(F) = \chi(X)$, for $F \simeq_{\mathbb{Q}} S^r \vee S^s \vee S^t$ we must have either $r$, $s$ and $t$ all are even or exactly one of them is even. Similarly, for $F \simeq_{\mathbb{Q}} S^s \vee P^2(r)$ we must have either $s$ and $r$ both even or both odd.\\
\indent For $a=0$, take $X=S^{n}\vee S^{2n} \vee S^{3n}$. If $n$ is even, we take $\mathbb{S}^1$ actions on $S^n$, $S^{2n}$ and $S^{3n}$ having $S^r$, $S^s$ and $S^t$ respectively as fixed point sets, where  $r$, $s$ and $t$ are all even. And if $n$ is odd, we take actions where exactly one of them is even namely $s$. This gives an action on $X$ with $S^{r}\vee S^{s} \vee S^{t}$ as the fixed point set, where the wedge is taken at some fixed points on the subspheres.\\
For the other case take $X=S^n \vee Y$, where $Y=S^{n-1}\star P^2(n)$ and $n$ is even. Consider the $\mathbb{S}^1$ action on $S^n$ with $S^s$ as its fixed point set for some even $s$ and the action on $Y$ fixing $ P^2(r)$ for some even $r$. This can be obtained by taking a free action on $S^{n-1}$ and an action on $P^2(n)$ with $P^2(r)$ as the fixed point set. For example the $\mathbb{S}^1$ action on $\mathbb{R}^n$ with $\mathbb{R}^r$ as its fixed point set ($r$ is even) induces an action on $X_n \simeq_{\mathbb{Q}} P^2(n)$ with $X_r \simeq_{\mathbb{Q}} P^2(r)$ as its fixed point set or one can use \cite{Tom} for $n=4$ and $r=2$. This gives a $\mathbb{S}^1$ action on $X$ with  $F\simeq_{\mathbb{Q}} S^s \vee P^2(r)$.\vspace{1.5 mm }\\
\indent For $a\neq 0$, as above, we can construct an action on $X=P^2(n)\vee S^{3n}$ with $F\simeq_{\mathbb{Q}}S^s \vee P^2(r)$ for even integers $r$ and $s$. In this case the fixed point set cannot be a wedge of three spheres.\vspace{1.5 mm }\\
\indent Now suppose that $n$ is even and $X$ is not totally non-homologous to zero in $X_G$. Then by Proposition 3,
$$ \sum_{i\geq 0} rk H^i(F)\neq \sum_{i\geq 0} rk H^i(X) = 4 $$
and hence
$$\sum_{i\geq 0} rk H^i(F) \leq 3.$$
This gives $\chi(F)=$ -1, 0, 1, 2 or 3. But Proposition 4 gives $\chi(F)=\chi(X)=4$, a contradiction. This completes the proof of the theorem. $\square$

\section{Proof of Theorem 2 and examples}
Let $X$ be not totally non-homologous to zero in $X_G$. Then $n$ is odd and $\chi(F)=\chi(X)=0$. As above $\sum_{i\geq 0} rk H^i(F)\leq 3$ and hence $\chi(F)=$ -1, 0, 1, 2 or 3.\\
But $\chi(F) =0$ and therefore either $F= \phi$ or $F\simeq_{\mathbb{Q}} S^r$, where $1\leq r\leq 3n$ is an odd integer.\vspace{1.5 mm }\\
\indent Recall that when $n$ is odd, we have $a=0$. We now construct a $\mathbb{S}^1$ action on $S^2 \times S^{n+2}$ with $S^3$ as its fixed point set, where $n \geq 3$ is an odd integer \cite[p. 268]{B2}. Let $\eta$ be the Hopf 2-plane bundle over $S^2$ and  $- \eta$ be its inverse, that is $- \eta \oplus \eta$ = trivial 4-plane bundle. Let $\epsilon$ be the trivial $(n-1)$-plane bundle over $S^2$. Then $- \eta \oplus \epsilon$ is a $(n+1)$-plane bundle (where $n+1$ is even) admitting a fibre-wise orthogonal action of $\mathbb{S}^1 \subset O(n+1)$ by bundle maps leaving the zero section fixed (which is the base space $S^2$). Together with the trivial action of $\mathbb{S}^1$ on $\eta $, this gives an action on the trivial $(n+3)$-plane bundle
$$ \eta \oplus (- \eta \oplus \epsilon): ~\mathbb{R}^{n+3} \hookrightarrow S^2 \times \mathbb{R}^{n+3} \longrightarrow S^2$$
whose fixed set is $\eta$. Taking the unit sphere bundles we get an action of $\mathbb{S}^1$ on $S^2 \times S^{n+2}$ with fixed point set $S^3$ (which is the total space of the unit sphere bundle of $\eta$). Now, remove a fixed point from $S^2 \times S^{n+2}$ to obtain a space $Z\simeq_{\mathbb{Q}} S^2 \vee S^{n+2}$ with a $\mathbb{S}^1$ action and  contractible fixed point set. Let  $\mathbb{S}^1$ act trivially on $S^{n-3}$ and consider the induced action on the join $W = S^{n-3}\star Z$ which is homotopically equivalent to $S^n\vee S^{2n}$. This action on $W$ has a contractible fixed point set. For a given odd integer $1\leq r\leq 3n$, consider the $\mathbb{S}^1$ action on $S^{3n}$ with $S^r$ as its fixed point set. Then the wedge of $W$ and $S^{3n}$ at a fixed point is a space $X\simeq_{\mathbb{Q}} S^n\vee S^{2n}\vee S^{3n}$ and has a $\mathbb{S}^1$ action with its fixed point set $F \simeq_{\mathbb{Q}} S^r$.  $\square$ \\

\noindent \textbf{Remark.} It is clear that there is no free $\mathbb{S}^1$ action on $S^n\vee S^{2n}\vee S^{3n}$. The author does not know of any free $\mathbb{S}^1$ action on a space $X\simeq_{\mathbb{Q}} S^n\vee S^{2n}\vee S^{3n}$. In case it exists, it will be interesting to determine the cohomology ring of the orbit space, for which one can exploit the Leray spectral sequence associated to the Borel fibration $X\hookrightarrow X_{\mathbb{S}^1} \longrightarrow B_{\mathbb{S}^1}$.\\

\noindent \textbf{Acknowledgement.} The author is grateful to the referee for several useful remarks which improved the presentation and exposition of the paper. In particular, the reference Tomter \cite{Tom} was suggested by the referee, which is used in examples in case (3) and (4) of section 3.

\bibliographystyle{amsplain}

\end{document}